\numberwithin{equation}{section}
\newtheorem{theorem}{Theorem}[section]
\newtheorem{lemma}[theorem]{Lemma}
\newtheorem{proposition}[theorem]{Proposition}
\theoremstyle{definition}
\newtheorem*{acknowledgment}{Acknowledgment}
\renewcommand{\Re}{\mathrm{Re}\hspace{1pt}}
\renewcommand{\Im}{\mathrm{Im}\hspace{1pt}}
\newcommand{\meas}{\mathrm{meas}} 
\newcommand{\m}{\mathbf{m}}
\newcommand{\dist}{\mathrm{dist}}
\title[Discrete universality for Matsumoto zeta-functions]{Discrete universality for Matsumoto zeta-functions and nontrivial zeros of the Riemann zeta-function}
\author[K. Nakai]{Keita Nakai}
\date{}
\begin{document}

\begin{abstract}

In 2017, Garunk\v{s}tis, Laurin\v{c}ikas and Macaitien\.{e} proved the discrete universality theorem for the Riemann zeta-function sifted by imaginary part of nontrivial zeros of the Riemann zeta-function. 
This discrete universality has been extended in various zeta-functions and $L$-functions. 
In this paper, we generalize this discrete universality for Matsumoto zeta-functions.

\end{abstract}

\maketitle

\section{Introduction and the statement of main results}

Let $s = \sigma + it$ be a complex variable. 
The Riemann zeta-function $\zeta(s)$ is defined by the infinite series 
$\sum_{n=1}^{\infty} n^{-s}$ 
in the $\sigma > 1$, 
and can be continued meromorphically to the whole plane $\mathbb{C}$. 
In 1975, Voronin proved the following universality property of the Riemann zeta-function. 

\begin{theorem} [Voronin, \cite{Vo}]
Let $\mathcal{K}$ be a compact set in the strip $1/2 < \sigma < 1$ with connected complement, and let $f(s)$ be a non-vanishing continuous function on $\mathcal{K}$ that is analytic in the interior of $\mathcal{K}$. Then, for any $\varepsilon > 0$ 
\[
\liminf_{T \to \infty} \frac{1}{T} \meas \left\{\tau \in [0, T] :  \sup_{s \in \mathcal{K}} |\zeta(s + i\tau) -f(s)| < \varepsilon \right\} > 0,
\]
where $\meas$ denotes the 1-dimensional Lebesgue measure. 
\end{theorem}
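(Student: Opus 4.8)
The plan is to follow the classical route, which splits the statement into a distributional limit theorem for the shifts $\zeta(s+i\tau)$ and a computation of the support of the limit.

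\textbf{Reduction.} First I would replace $f$ by the exponential of a polynomial. Since $\mathcal{K}$ has connected complement and $f$ is continuous, non-vanishing on $\mathcal{K}$ and analytic in its interior, $f$ admits a continuous logarithm there, and Mergelyan's approximation theorem produces a polynomial $g$ with $\sup_{s\in\mathcal{K}}|\log f(s) - g(s)|$ as small as we like; so it suffices to approximate $e^{g(s)}$. Slightly enlarging $\mathcal{K}$, I may assume it is contained in the open strip $D = \{s : 1/2 < \Re s < 1\}$ and work in the Fr\'echet space $H(D)$ of functions analytic on $D$ with the topology of uniform convergence on compact subsets.

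\textbf{A limit theorem.} Let $\mathbb{T} = \{z\in\mathbb{C} : |z|=1\}$ and $\Omega = \prod_p \mathbb{T}$, with Haar probability measure $\m$. For $\omega=(\omega(p))_p\in\Omega$ set
\[
\zeta(s,\omega) = \prod_p \left(1 - \frac{\omega(p)}{p^{s}}\right)^{-1},
\]
which converges for $\m$-almost all $\omega$ and defines an $H(D)$-valued random element. The first main step is to show that the maps $\tau\mapsto\zeta(\,\cdot\,+i\tau)$, with $\tau$ distributed uniformly on $[0,T]$, converge in distribution in $H(D)$ as $T\to\infty$ to $\zeta(\,\cdot\,,\omega)$. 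This relies on the Kronecker--Weyl equidistribution of $(p^{-i\tau})_p$ in $\Omega$, which follows from the $\mathbb{Q}$-linear independence of $\{\log p\}$, together with a mean-square estimate showing that truncating the Euler product at a large parameter perturbs both $\zeta(s+i\tau)$ and $\zeta(s,\omega)$ only slightly on average.

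\textbf{Support of the limit --- the crux.} I would then identify the support of the law of $\zeta(\,\cdot\,,\omega)$ as
\[
S = \{h\in H(D) : h(s)\neq 0 \text{ for all } s\in D\} \cup \{0\}.
\]
Taking logarithms, $\log\zeta(s,\omega) = \sum_p\sum_{k\geq 1} \frac{\omega(p)^{k}}{k\,p^{ks}}$, and the inclusion $S\subseteq\supp$ reduces to: the set of convergent sums of $\sum_p \log\bigl(1-\omega(p)p^{-s}\bigr)^{-1}$, as $\omega$ ranges over $\Omega$, is dense in $H(D)$. The leading term $\sum_p \omega(p)p^{-s}$ is a series of vectors $p^{-s}$ in a Hilbert space of functions on a disc inside $D$ for which $\sum_p\|p^{-s}\|^{2}<\infty$ while $\sum_p\|p^{-s}\|=\infty$ --- the former uses $\Re s > 1/2$, the latter uses $\Re s < 1$. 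A Riemann-type rearrangement theorem for Hilbert spaces then guarantees that the unimodular reweightings $\sum_p\omega(p)p^{-s}$ sweep out a dense set, and the terms with $k\geq 2$ form an absolutely convergent perturbation that does not spoil this. I expect this to be the genuine obstacle: steering the conditionally convergent series $\sum_p\omega(p)p^{-s}$ toward an arbitrary target by choice of the unimodular factors is exactly where the strip $1/2<\Re s<1$ enters, and it requires both the Hilbert-space rearrangement theorem and careful bookkeeping when passing between $\zeta$, its logarithm and the exponential.

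\textbf{Conclusion.} Since $e^{g}$ lies in $S$, the support computation shows that every neighbourhood of $e^{g}$ in $H(D)$ has positive measure under the law of $\zeta(\,\cdot\,,\omega)$; combining this with the limit theorem and the portmanteau theorem yields
\[
\liminf_{T\to\infty}\frac1T\meas\Bigl\{\tau\in[0,T] : \sup_{s\in\mathcal{K}}\bigl|\zeta(s+i\tau)-e^{g(s)}\bigr|<\varepsilon/2\Bigr\}>0,
\]
and the reduction step then completes the proof of the theorem.
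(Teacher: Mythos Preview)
The paper does not prove Voronin's theorem at all: it is quoted as a classical result with a reference to \cite{Vo} and serves purely as motivation for the paper's own main theorem (Theorem~\ref{main theorem}). There is therefore nothing in the paper to compare your argument against directly. That said, your outline is the standard Bagchi-type proof and is correct as a sketch; moreover, the architecture you describe --- Mergelyan reduction to $e^{g(s)}$, a weak limit theorem identifying the limiting distribution with the law of a random Euler product on $\Omega=\prod_p S^1$, the support computation giving the set $S$ of non-vanishing (or identically zero) functions, and an application of the Portmanteau theorem --- is exactly the strategy the paper employs in Sections~3--4 to prove Theorem~\ref{main theorem}, only with the continuous shifts $i\tau$ replaced by the discrete shifts $ih\gamma_k$ and with $\zeta$ replaced by the Matsumoto zeta-function $\varphi$.
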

In this universality, the shift $\tau$ can take arbitrary non-negative real values continuously. 
If the shift can take a certain values discretely and the universality holds by this shift, then we call it the discrete universality. 
First Reich~\cite{Re} proved the discrete universality on the Dedekind zeta-function, and many mathematicians  extended and generalized. 
See e.g. a survey paper \cite{Ma15} for the recent studies.

Let $0 < \gamma_1 \le \gamma_2 \le \dots$ be the imaginary parts of nontrivial zeros of the Riemann zeta-function. 
Montgomery~\cite{Mo73} conjectured the asymptotic relation 

\begin{align*}
\sum_{\substack{0 < \gamma, \gamma' \le T \\ 2\pi\alpha_1/\log{T} \le \gamma - \gamma' \le 2\pi\alpha_2/\log{T}}}1  \sim \left( \int_{\alpha_1}^{\alpha_2} \left(1 - \left( \frac{\sin{\pi{u}}}{\pi u}\right)^2 \right) \, du + \delta(\alpha_1, \alpha_2) \right) \frac{T}{2\pi} \log{T}
\end{align*}
as $T \to \infty$ for $\alpha_1 < \alpha_2$, where $\delta(\alpha_1, \alpha_2) = 1$ if $0 \in [\alpha_1, \alpha_2]$ and $\delta(\alpha_1, \alpha_2) = 0$ otherwise.
We consider the weak Montgomery conjecture: 
\begin{equation}
\sum_{\substack{0 < \gamma, \gamma' \le T \\ |\gamma - \gamma'| < c/\log{T}}}1 \ll T\log{T}
\label{weak}
\end{equation}
as $T \to \infty$ with a certain constant $c > 0$.
Under this conjecture, the following discrete universality for the Riemann zeta-function holds. 

\begin{theorem} [Garunk\v{s}tis, Laurin\v{c}ikas and Macaitien\.{e} \cite{GLM}]
Let $\mathcal{K}$ be a compact set in the strip $1/2 < \sigma < 1$ with connected complement, let $f(s)$ be a non-vanishing continuous function on $\mathcal{K}$ that is analytic in the interior of $\mathcal{K}$ and assume (\ref{weak}). Then, for any $\varepsilon > 0$ and $h > 0$, 
\[
\liminf_{N \to \infty} \frac{1}{N} \# \left\{1 \le k \le N : \sup_{s \in \mathcal{K}} |\zeta(s + ih\gamma_k) -f(s)| < \varepsilon \right\} > 0, 
\]
where $\#A$ denotes the cardinality of a set $A \subset \mathbb{N}$.
\end{theorem}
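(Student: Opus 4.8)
The plan is to follow the classical probabilistic (Bagchi-type) route to universality, transplanted to the discrete sequence of shifts $\{h\gamma_k\}$. Write $D = \{s \in \mathbb{C} : 1/2 < \sigma < 1\}$ and let $H(D)$ be the space of analytic functions on $D$ with the topology of uniform convergence on compact subsets; it is metrizable by a metric $\rho$ inducing this topology. For $A \in \mathcal{B}(H(D))$ put
\[
P_N(A) = \frac{1}{N}\,\#\bigl\{1 \le k \le N : \zeta(s + ih\gamma_k) \in A\bigr\}.
\]
The core assertion to establish is that $P_N$ converges weakly, as $N \to \infty$, to the distribution $P_\zeta$ of the $H(D)$-valued random element $\zeta(s,\omega) = \prod_p \bigl(1 - \omega(p)\,p^{-s}\bigr)^{-1}$, where $\omega = (\omega(p))_p$ is distributed according to the Haar probability measure $m_H$ on the infinite-dimensional torus $\Omega = \prod_p \{z \in \mathbb{C} : |z| = 1\}$.

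I would prove this limit theorem in four steps. First, on $\Omega$ itself, show that $Q_N(A) = \tfrac1N\#\{1 \le k \le N : (p^{-ih\gamma_k})_p \in A\}$ tends weakly to $m_H$; by the Fourier (Weyl) criterion on the compact group $\Omega$ this reduces to
\[
\frac{1}{N}\sum_{k=1}^{N}\exp\Bigl(-ih\gamma_k\sum_p \ell_p \log p\Bigr) \longrightarrow 0 \qquad (N \to \infty)
\]
for every finitely supported nonzero integer vector $(\ell_p)$, and — since $x := \exp\!\bigl(h\sum_p \ell_p\log p\bigr) \ne 1$ by unique factorization — this follows from Landau's formula for $\sum_{0 < \gamma \le T} x^{i\gamma}$ combined with the Riemann–von Mangoldt formula $N(T) \sim \tfrac{T}{2\pi}\log T$ and partial summation. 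Second, transporting $Q_N$ through the continuous map sending $\omega$ to the Dirichlet polynomial $\zeta_M(s,\omega)$ (a smoothed truncation of the Euler product) yields the limit theorem for the polynomials $\zeta_M(s + ih\gamma_k)$. Third, one must pass from $\zeta_M$ back to $\zeta$ in mean, i.e. show
\[
\lim_{M \to \infty}\limsup_{N \to \infty}\frac{1}{N}\sum_{k=1}^{N}\rho\bigl(\zeta(\,\cdot\, + ih\gamma_k),\ \zeta_M(\,\cdot\, + ih\gamma_k)\bigr) = 0,
\]
together with the $m_H$-almost sure analogue $\rho(\zeta(\,\cdot\,,\omega),\zeta_M(\,\cdot\,,\omega)) \to 0$. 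Fourth, combine the preceding with the standard approximation theorem for weak convergence of probability measures (Billingsley) to conclude $P_N \Rightarrow P_\zeta$.

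It then remains to use the classical description of the support: $\supp P_\zeta = S$, where $S = \{g \in H(D) : g(s) \ne 0 \text{ for all } s \in D\} \cup \{0\}$; this rests on the independence of the local factors $\log\bigl(1 - \omega(p)p^{-s}\bigr)^{-1}$, a support theorem for convergent series of independent $H(D)$-valued random elements, and the rearrangement/denseness argument powered by the divergence of $\sum_p 1/p$, as in the proof of Voronin's theorem. To finish, let $\mathcal{K}$, $f$, $\varepsilon$, $h$ be as in the statement. Taking a continuous branch of $\log f$ on $\mathcal{K}$ (available because $f$ is zero-free and $\mathcal{K}$ has connected complement) and applying Mergelyan's theorem, we obtain a polynomial $q(s)$ with $\sup_{s\in\mathcal{K}}|f(s) - e^{q(s)}| < \varepsilon/2$. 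Since $e^{q}$ is zero-free on $D$, it lies in $S = \supp P_\zeta$, so the open set $G = \{g \in H(D) : \sup_{s\in\mathcal{K}}|g(s) - e^{q(s)}| < \varepsilon/2\}$ satisfies $P_\zeta(G) > 0$. By weak convergence and the portmanteau theorem for open sets,
\[
\liminf_{N \to \infty}\frac{1}{N}\#\Bigl\{1 \le k \le N : \sup_{s\in\mathcal{K}}|\zeta(s + ih\gamma_k) - f(s)| < \varepsilon\Bigr\} \ \ge\ \liminf_{N\to\infty} P_N(G) \ \ge\ P_\zeta(G) > 0,
\]
which is the assertion. The main obstacle is the third step above, the mean approximation of $\zeta$ by $\zeta_M$ along the zeros. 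It comes down to a discrete mean-square estimate of the form
\[
\frac{1}{N}\sum_{k=1}^{N}\int_{-A}^{A}\bigl|\zeta(\sigma_0 + it + ih\gamma_k) - \zeta_M(\sigma_0 + it + ih\gamma_k)\bigr|^2\,dt \ \longrightarrow\ 0
\]
for a fixed $\sigma_0 \in (1/2,1)$ (after which compact $\mathcal{K} \subset D$ are reached by Cauchy's integral formula). Expanding the square produces off-diagonal sums $\sum_{k \le N}(m/n)^{-ih\gamma_k}$, weighted by $\min\!\bigl(2A,\, 2/|\log(m/n)|\bigr)$, that must be controlled uniformly as $m, n$ run over integers up to $M \to \infty$. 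The near-diagonal range, where $h\log(m/n)$ is small, is precisely where the frequencies $h\gamma_k$ tend to cluster; bounding the number of such clustering pairs of zeros is exactly what the weak Montgomery conjecture~(\ref{weak}) supplies, and it is through this input — inserted via the explicit formula and partial summation — that~(\ref{weak}) is used. The remaining ingredients are essentially a routine transcription of the continuous-parameter universality theory.
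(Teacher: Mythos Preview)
Your overall architecture is correct and is exactly the route the paper follows for its generalisation (Theorem~\ref{main}): equidistribution of $(p^{-ih\gamma_k})_p$ on $\Omega$ via characters and Landau's formula (the paper's Lemma~\ref{Prob3}, cf.\ \cite[Theorem~2.3]{GLM}), continuous mapping to smoothed truncations, a mean-approximation step, Billingsley-type patching, identification of the support of $P_\zeta$, and finally Mergelyan plus the portmanteau inequality. The paper does not reprove the quoted theorem itself but cites \cite{GLM}; its proof of Theorem~\ref{main} specialises to it, so that is the comparison below.

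The divergence, and the gap, is in your step~(iii). You propose to expand $|\zeta-\zeta_M|^2$ as a double Dirichlet sum and handle the off-diagonal pieces $\sum_{k\le N}(m/n)^{-ih\gamma_k}$ by the explicit formula, invoking~\eqref{weak} for the near-diagonal range $m\approx n$. Two problems. First, $\zeta(s)$ has no Dirichlet-series expansion on $1/2<\sigma<1$, so the square cannot be opened as written; the paper instead represents $\zeta-\zeta_X$ as a Mellin integral (Lemma~\ref{lemma smooth}) and shifts the contour to $\Re w=-\delta$, which reduces everything to a discrete \emph{first} moment of $|\zeta|$ on a fixed vertical line. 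Second, your reading of where~\eqref{weak} enters is off: \eqref{weak} bounds the number of close pairs of \emph{zeros} $(\gamma,\gamma')$ and says nothing about $\sum_k x^{i\gamma_k}$ for $x$ close to $1$ (the Landau--Gonek error term still blows up like $|\log x|^{-1}$). In \cite{GLM}, and in Lemma~\ref{Prob1} here, the hypothesis~\eqref{weak} is used exclusively through Gallagher's discrete-mean inequality \cite[Lemma~1.4]{Mo71}, whose penalty term is exactly the count of close pairs among the sample points $h\gamma_k$; once that penalty is $O(N)$, the discrete mean is dominated by the classical continuous bound $\int_{-T}^{T}|\zeta(\sigma+it)|^2\,dt\ll T$. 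Replacing your direct expansion by the Mellin representation together with Gallagher's lemma is the missing idea; with that in place, the remainder of your sketch coincides with the paper's argument.
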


This universality theorem has been extended other zeta-functions and $L$-functions in \cite{La19}, \cite{LP}, \cite{La20}, \cite{BGKMP}, \cite{BFGMR}, \cite{Ka}. 
In this paper, we prove this universality for the class of Matsumoto zeta-functions. 

The notion of Matsumoto zeta-function $\varphi(s)$ is introduced by Matsumoto \cite{Ma90} and defined by 

\[
\varphi(s) = \prod_{n=1}^{\infty}\prod_{j=1}^{g(n)}(1 - a_n^{(j)}p_n^{-f(j, n)s})^{-1}, 
\]
where $g(n) \in \mathbb{N}$, $f(j, n) \in \mathbb{N}$, $a_n^{(j)} \in \mathbb{C}$, and $p_n$ is the $n$th prime number. 
Assuming the conditions 
\begin{equation}
g(n) \le C_1 p_n^{\alpha},\ |a_n^{(j)}| \le p_n^{\beta}
\label{prime}
\end{equation}
with nonnegative constants $\alpha$, $\beta$ and a positive constant $C_1$, we have 
\[
\varphi(s) = \sum_{n=1}^{\infty} \frac{b_n}{n^s}
\]
for $\sigma > \alpha + \beta + 1$. 
Furthermore $b_n \ll n^{\alpha + \beta + \varepsilon}$ for any $\varepsilon > 0$ 
if all prime factors of $n$ are large (see \cite[Appendix]{KM}). 

In this paper, we consider Matsumoto zeta-functions satisfying following assumptions. 

\begin{itemize}

\item[(i)] the condition (\ref{prime})

\item[(ii)] There exists $\alpha + \beta + 1/2 \le \rho < \alpha + \beta + 1$ such that the function $\varphi(s)$ is meromorphic in the half plane $\sigma \ge \rho$, all poles in this region are included in a compact set, and there is no pole on the line $\sigma = \rho$. 

\item[(iii)] There exists a positive constant $c_2$ such that 
\[
\varphi(\sigma +it ) \ll |t|^{c_2}
\]
as $|t| \to \infty$ for $\sigma > \rho$.

\item[(iv)] For $\rho \le \sigma < \min\{\Re(z) : \text{$z$ is a pole of $\varphi$} \}$, we have
\[
\int_{-T}^{T} |\varphi(\sigma + it)|^2\, dt \ll T. 
\]

\item[(v)] There exists a positive $\kappa$ such that 
\[
 \lim_{x \to \infty} \frac{1}{\pi(x)} \sum_{p_n \le x} \left|\sum_{\substack{j=1 \\ f(j, n) = 1}}^{g(n)} a_n^{(j)}\right|^2 p_n^{-2(\alpha + \beta)} = \kappa, 
\]
where $\pi(x)$ is the prime counting function.
\end{itemize}

Let $D_{\rho} = \{s \in \mathbb{C} : \rho < \sigma < \alpha + \beta + 1\}$.
Now we state the main theorem of this paper. 

\begin{theorem} \label{main}

Let $\mathcal{K}$ be a compact set in $D_{\rho}$ with connected complement, let $f(s)$ be a non-vanishing continuous function on $\mathcal{K}$ that is analytic in the interior of $\mathcal{K}$ and assume (\ref{weak}). Then, for any $\varepsilon > 0$ and $h > 0$, 
\[
\liminf_{N \to \infty} \frac{1}{N + 1} \# \left\{N \le k \le 2N : \sup_{s \in \mathcal{K}} |\varphi(s + ih\gamma_k) -f(s)| < \varepsilon \right\} > 0.
\]
\label{main theorem}
\end{theorem}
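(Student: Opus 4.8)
The plan is to follow the now-standard probabilistic approach to universality (à la Bagchi), adapted to the discrete shifts $h\gamma_k$, and building on the version of this method already developed by Garunkštis–Laurinčikas–Macaitienė for $\zeta(s)$. The starting point is a limit theorem in the space of analytic functions $H(D_\rho)$ equipped with the topology of uniform convergence on compacta. I would first establish that the sequence of measures
\[
P_N(A) = \frac{1}{N+1}\#\bigl\{N\le k\le 2N : \varphi(s+ih\gamma_k)\in A\bigr\}, \qquad A\in\mathcal{B}(H(D_\rho)),
\]
converges weakly, as $N\to\infty$, to an explicitly described limit measure $P_\varphi$. The natural candidate for $P_\varphi$ is the distribution of the random element $\varphi(s,\omega) = \prod_n\prod_j (1-a_n^{(j)}\omega(p_n)^{f(j,n)}p_n^{-f(j,n)s})^{-1}$, where $\omega = (\omega(p))_p$ is a sequence of independent random variables uniformly distributed on the unit circle; the product converges almost surely in $H(D_\rho)$ by the $L^2$-bound (iv) together with (i), exactly as in Matsumoto's continuous theory.

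The key input making the discrete version work is the discrete analogue of the Weyl/uniform-distribution statement for $(h\gamma_k/2\pi)$: under the weak Montgomery conjecture (\ref{weak}), the sequence $\bigl(\frac{h\gamma_k}{2\pi}\log p_1,\dots,\frac{h\gamma_k}{2\pi}\log p_m\bigr)$ is uniformly distributed modulo $1$ in $\mathbb{T}^m$ when we average over $N\le k\le 2N$. This is where (\ref{weak}) enters: it is used to control the exponential sums $\sum_{N\le k\le 2N} p^{ih\gamma_k}$ via the explicit formula for $\sum_{\gamma\le T} x^{i\gamma}$ (Landau's formula / Fujii-type estimates), the error terms being absorbed using the near-linear growth of $\gamma_k$ (so that $\#\{k : N\le k\le 2N\}\asymp N$ corresponds to $\gamma_k$ ranging over an interval of length $\asymp N/\log N$), and the second-moment bound on $\gamma-\gamma'$ spacings keeping the diagonal contribution at the correct order. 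From this I obtain a limit theorem for the truncated Dirichlet polynomials $\varphi_n(s+ih\gamma_k)$, then pass to $\varphi$ itself by an approximation argument: one shows $\varphi(s+ih\gamma_k)$ is, on average over $k$, close to its truncation, using assumptions (ii)–(iv) to move the contour to $\sigma=\rho$ and invoking the mean-value estimate (iv) transplanted to a discrete mean value via the Montgomery–Vaughan / Gallagher-type inequality combined with (\ref{weak}).

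Once the limit theorem $P_N\Rightarrow P_\varphi$ is in place, the remaining step is to identify the support of $P_\varphi$: it should be exactly $\{g\in H(D_\rho) : g(s)\ne 0 \text{ or } g\equiv 0\}$. This is the classical support computation — one uses the independence of the $\omega(p)$, a rearrangement/density argument in the Hardy space $H^2$ via the Pechersky–Bagchi theorem, and crucially assumption (v), which guarantees that the "$f(j,n)=1$" part of the Euler factors contributes a genuinely nondegenerate (nonzero variance $\kappa$) Gaussian-type component so that the relevant series $\sum_p \frac{(\text{coeff})}{p^s}\omega(p)$ has the right closure properties; the higher $f(j,n)\ge 2$ factors are harmless since they converge in a larger half-plane. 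Finally, given $f$ as in the statement, the Mergelyan approximation theorem yields a polynomial $p(s)$ with $\sup_{\mathcal K}|e^{p(s)}-f(s)|<\varepsilon/2$, and $e^{p(s)}$ lies in the support; since the set $\{g : \sup_{\mathcal K}|g-e^{p}|<\varepsilon/2\}$ is open and meets the support, the portmanteau theorem gives $\liminf_N P_N(\{g : \sup_{\mathcal K}|g-f|<\varepsilon\}) \ge P_\varphi(\{g:\sup_{\mathcal K}|g-e^p|<\varepsilon/2\}) > 0$, which is the assertion.

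I expect the main obstacle to be the discrete mean-value estimate for $\varphi$ on the critical line $\sigma=\rho$ — that is, showing $\frac{1}{N}\sum_{N\le k\le 2N}|\varphi(\rho+\delta+ih\gamma_k)|^2 \ll 1$ uniformly for small $\delta>0$. Unlike the continuous case, where (iv) is exactly the needed bound, here one must convert the continuous second moment into a discrete one over the zeros $\gamma_k$; this requires both a Gallagher-type Sobolev inequality to pass from the sum to an integral plus derivative term, and the weak Montgomery conjecture (\ref{weak}) to handle the clustering of zeros — precisely the reason (\ref{weak}) is hypothesized. Getting the growth condition (iii) and the pole structure (ii) to cooperate in the contour shift, so that this mean value is finite despite $\varphi$ possibly having poles (located in a compact set, off the line $\sigma=\rho$), is the delicate technical heart of the argument.
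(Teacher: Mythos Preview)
Your proposal is correct and follows essentially the same approach as the paper: a Bagchi-type limit theorem $P_N\Rightarrow P_\varphi$ (established via equidistribution of $(p^{ih\gamma_k})_p$ under (\ref{weak}) together with a discrete mean-value approximation using Gallagher's lemma and assumption (iv)), identification of the support of $P_\varphi$ via assumption (v), and then Mergelyan plus the portmanteau theorem. The only technical adjustment is that the paper works in $\mathcal{H}(\mathcal{R})$ for a bounded open rectangle $\mathcal{R}\supset\mathcal{K}$ rather than in $H(D_\rho)$, because the poles of $\varphi$ allowed by (ii) force $s\mapsto\varphi(s+ih\gamma_k)$ to have poles somewhere in the unbounded strip $D_\rho$ for every $k$, whereas on the bounded $\mathcal{R}$ it is holomorphic for all large $k$; the paper also uses a smooth Mellin cutoff $\varphi_X$ in place of a sharp truncation, which makes the residue contributions from the poles in the contour shift easy to control.
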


Sourmelidis, Srichan and Steuding \cite{SSS} proved similar universality for the Riemann zeta-function unconditionally. 
Their statement holds for the wider context of $\alpha$-points of $L$-functions from the Selberg class. 
However we have to take a subsequence of $\alpha$-points of $L$-functions from the Selberg class in their result. 
Using their results, we have following theorem without (\ref{weak}). 

\begin{theorem}
Let $\mathcal{K}$ and $f$ be same as Theorem~\ref{main}. 
Let $\mathcal{L}$ be a non-constant $L$-function in the Selberg class. 
Then there exists a subsequence of $\alpha$-points $(\rho_{\alpha, n_k})_{k \in \mathbb{N}}$ of $\mathcal{L}(s)$ such that for any $\varepsilon > 0$, 
\[
\liminf_{N \to \infty} \frac{1}{N + 1} \# \left\{N \le k \le 2N : \sup_{s \in \mathcal{K}} |\varphi(s + i\gamma_{\alpha, n_k}) -f(s)| < \varepsilon \right\} > 0
\]
holds, where $\gamma_{\alpha, n_k} = \Im(\rho_{\alpha, n_k})$. 
\end{theorem}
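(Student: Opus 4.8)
The plan is to follow the proof of Theorem~\ref{main} essentially verbatim, replacing the single place where the weak Montgomery conjecture \eqref{weak} enters — the equidistribution of the shifts — by an unconditional input extracted from \cite{SSS}. Recall that the proof of Theorem~\ref{main} runs through three standard stages: (a) a limit theorem asserting that the empirical measures $P_{N,\varphi}(A)=\frac{1}{N+1}\#\{N\le k\le 2N:\varphi(\,\cdot\,+ih\gamma_k)\in A\}$ converge weakly on the space $H(D_\rho)$ of analytic functions on $D_\rho$, as $N\to\infty$, to a limit measure $P_\varphi$; (b) the identification of $\supp P_\varphi$ with $S_\rho:=\{g\in H(D_\rho):g(s)\neq 0\text{ for all }s\in D_\rho\}\cup\{0\}$; (c) the deduction of the universality statement from (a) and (b) via Mergelyan's approximation theorem together with the portmanteau theorem for open sets.

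First I would isolate, from the work of Sourmelidis, Srichan and Steuding, the arithmetic ingredient behind stage (a). For a non-constant $\mathcal{L}$ in the Selberg class they show that one may extract a subsequence $(\rho_{\alpha,n_k})_k$ of $\alpha$-points such that, with $\gamma_{\alpha,n_k}=\Im(\rho_{\alpha,n_k})$, the points $\omega_k:=(p^{-i\gamma_{\alpha,n_k}})_p$ of the infinite-dimensional torus $\Omega=\prod_p\{z\in\mathbb{C}:|z|=1\}$ become equidistributed with respect to the Haar probability measure $m_H$ in the $[N,2N]$-window, i.e.
\[
\lim_{N\to\infty}\frac{1}{N+1}\#\{N\le k\le 2N:\omega_k\in A\}=m_H(A)
\]
for every Borel $A\subseteq\Omega$ with $m_H(\partial A)=0$. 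This is exactly the role that \eqref{weak} played for the sequence $(h\gamma_k)$ in \cite{GLM} and in Theorem~\ref{main} (here $h=1$, which is harmless). Taking this as a black box, stage (a) goes through unchanged: one introduces the $\Omega$-parametrized random element
\[
\varphi(s,\omega)=\prod_{n=1}^{\infty}\prod_{j=1}^{g(n)}\bigl(1-a_n^{(j)}\omega(p_n)^{f(j,n)}p_n^{-f(j,n)s}\bigr)^{-1},
\]
which converges for $m_H$-a.e.\ $\omega$ by (i), and — using the meromorphic continuation and polynomial growth (ii)--(iii), the mean square bound (iv), and a Cauchy-integral plus approximation-in-mean argument — transfers the torus equidistribution above to weak convergence of $P_{N,\varphi}$ to $P_\varphi:=m_H\circ\varphi(\,\cdot\,,\omega)^{-1}$ on $H(D_\rho)$.

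Stages (b) and (c) are then literally the same as in the proof of Theorem~\ref{main} and never invoke \eqref{weak}. The support computation uses assumption (v): the positivity of $\kappa$ ensures that the degree-one factors ($f(j,n)=1$) contribute enough to the random Dirichlet series $\log\varphi(s,\omega)$ for a Pecherskii-type rearrangement theorem in the relevant Hilbert space to give $\supp P_\varphi\supseteq S_\rho$; and stage (c) is the routine Mergelyan-plus-portmanteau argument producing $\liminf_{N\to\infty}\frac{1}{N+1}\#\{N\le k\le 2N:\varphi(\,\cdot\,+i\gamma_{\alpha,n_k})\in G\}\ge P_\varphi(G)>0$ for a suitable open neighbourhood $G$ of $f$ in $H(D_\rho)$, from which the claimed inequality follows.

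The main obstacle, and the only genuinely new point, is the first step above: verifying that the results of \cite{SSS} really supply the torus equidistribution in the needed form — for a general Selberg-class $\mathcal{L}$, along one subsequence that simultaneously serves all primes, and compatibly with the $[N,2N]$ counting normalization of Theorem~\ref{main} rather than $[1,N]$. One must also record that the poles of $\varphi$ cause no trouble: since $\mathcal{K}\subset D_\rho$ is compact and the shifts are vertical, the translates $\mathcal{K}+i\gamma_{\alpha,n_k}$ stay in the pole-free region $\rho\le\sigma<\min\{\Re(z):z\text{ a pole of }\varphi\}$. Once the equidistribution lemma is in place, the remainder is a word-for-word repetition of the proof of Theorem~\ref{main} with $h\gamma_k$ replaced by $\gamma_{\alpha,n_k}$.
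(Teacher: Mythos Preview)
Your overall approach matches the paper's (which, it should be said, gives no separate proof of this theorem beyond the sentence ``Using their results, we have following theorem without \eqref{weak}''). However, there is a genuine oversight in your identification of where the weak Montgomery conjecture enters. You write that \eqref{weak} is used at a \emph{single} place, namely the torus equidistribution (the analogue of Lemma~\ref{Prob3}). In fact it is used at \emph{two} places in the proof of Theorem~\ref{main}: in Lemma~\ref{Prob3} for equidistribution, and also in Lemma~\ref{Prob1}, where the paper explicitly writes ``Using \eqref{weak}, assumption (iv) and the Gallagher Lemma on the discrete mean'' to pass from the continuous mean-square bound $\int_{-T}^{T}|\varphi(\sigma+it)|^2\,dt\ll T$ to the discrete estimate
\[
\frac{1}{N+1}\sum_{k=N}^{2N}|\varphi(\sigma_0+i\tau+ih\gamma_k)|\ll 1+|\tau|.
\]
Gallagher's lemma requires control on the spacing of the points $\gamma_k$, and that is exactly what \eqref{weak} supplies. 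For an arbitrary subsequence of $\alpha$-points this step could fail, so your claim that ``stage (a) goes through unchanged'' once equidistribution is granted is not justified as written.

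The repair is that \cite{SSS} delivers more than bare equidistribution: the subsequence $(\gamma_{\alpha,n_k})_k$ they extract has uniformly bounded-below gaps, and this spacing makes the Gallagher step immediate, removing the need for \eqref{weak} in Lemma~\ref{Prob1} as well. You should record this spacing property explicitly as part of the black box imported from \cite{SSS} and verify that the analogue of Lemma~\ref{Prob1} then goes through. One further minor point: your sentence about the translates $\mathcal{K}+i\gamma_{\alpha,n_k}$ lying in the region $\rho\le\sigma<\min\{\Re(z):z\text{ a pole of }\varphi\}$ is not correct, since the poles of $\varphi$ need not lie to the right of $\mathcal{K}$; the relevant observation (as in the paper's proof of Lemma~\ref{Prob1}) is that the poles lie in a compact set by assumption~(ii), so for all sufficiently large $k$ the vertical shifts $\mathcal{K}+i\gamma_{\alpha,n_k}$ simply avoid them.
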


\section{Preliminaries}

We fix a compact subset $\mathcal{K}$ satisfying the assumptions of Theorem~\ref{main theorem}. 
We define $\rho< \sigma_0 <  \min_{s \in \mathcal{K}} \Re(s)$ as all poles are contained in $\sigma > \sigma_0$. 
Then we fix $\sigma_1,\ \sigma_2$ such that  
\[
\rho< \sigma_0 < \sigma_1 < \min_{s \in \mathcal{K}} \Re(s),\ \max_{s \in \mathcal{K}} \Re(s) < \sigma_{2} < \alpha + \beta + 1.
\]

Then, we define the rectangle region $\mathcal{R}$ by 
\begin{equation} \label{definition of R}
\mathcal{R} = (\sigma_1,\ \sigma_2) \times i \left( \min_{s \in \mathcal{K}} \Im(s) - 1/2,\ \max_{s \in \mathcal{K}}\Im(s) + 1/2 \right).
\end{equation}

Let $\mathcal{H}(\mathcal{R})$ be the set of all holomorphic functions on $\mathcal{R}$. 

We write $\mathcal{B}(T)$ for the Borel set of $T$ which is a topological space.  
Let $S^1 = \{ s \in \mathbb{C} : |s| = 1 \}$. 
For any prime $p$, we put $S_p = S^1$ and  $\Omega = \prod_p S_p.$
Then there exists the probability Haar measure $\m$ on $(\Omega, \mathcal{B}(\Omega))$. 
Then $\m$ is written by $\m = \otimes_p \m_p$, where $\m_p$ is the probability Haar measure on $(S_p, \mathcal{B}(S_p))$.
 
Let $\omega(p)$ be the projection of $\omega \in \Omega$ to the coordinate space $S_p$.
$\{\omega(p) : \text{$p$ prime}\}$ is a sequence of independent complex-valued random elements defined on the probability space $(\Omega, \mathcal{B}(\Omega), \m)$. 
For $\omega \in \Omega$, we put $\omega(1) :=1$, 
\[
\omega(n) := \prod_p \omega(p)^{\nu(n ; p)}, 
\]
where $\nu(n ; p)$ is the exponent of the prime $p$ in the prime factorization of $n$.
Here, we define the $\mathcal{H}(\mathcal{R})$-valued random elements
\begin{equation*}
\begin{split}
\varphi(s, \omega) &:= \prod_{n=1}^{\infty}\prod_{j=1}^{g(n)}(1 - a_m^{(j)}\omega(p)^{f(j, n)}p_n^{-f(j, n)s})^{-1}  \\
                     &= \sum_{n=1}^{\infty} \frac{b_n \omega(n)}{n^s}.
\end{split}
\end{equation*}

We define the probability measures on $(\mathcal{H}(\mathcal{R}), \mathcal{B}(\mathcal{H}(\mathcal{R})))$ by 
\begin{equation*}
P_N(A) = \frac{1}{N + 1} \# \left\{N \le k \le 2N : \varphi(s +ih\gamma_k) \in A \right\}, 
\end{equation*}
\begin{equation*}
P(A) = \m \left\{\omega \in \Omega : \varphi(s, \omega) \in A \right\}
\end{equation*} 
for $A \in \mathcal{B}(\mathcal{H}(\mathcal{R}))$. 

\section{A limit theorem}

This section is based on \cite{Ko}. 

\begin{lemma} \label{lemma smooth}
Let $\psi : [0, \infty) \to \mathbb{C}$ be smooth and assume that $\psi$ and all its derivatives decay faster than any polynomial at infinity, and let $\hat{\psi}(s) = \int_{0}^{\infty} \psi(x) x^{s-1}\, dx$ be the Mellin transform of $\psi$ on $\Re(s) > 0$.
\begin{enumerate}
\item[(1)] The Mellin transform $\hat{\psi}$ extends to a meromorphic function on $\Re(s) > -1$, with at most a simple pole at $s = 0$ with residue $\psi(0)$.
\item[(2)] For any real numbers $-1 < A < B$, the Mellin transform has rapid decay in the strip $A \le \sigma \le B$, in the sense that for any integer $k \ge 1$, there exists a constant $C = C(k, A, B) \ge 0$ such that 
\[
|\hat{\psi}(\sigma + it) | \le C (1 + |t|)^{-k}.
\]
for all $A \le \sigma \le B$ and $|t| \ge 1$.
\item[(3)] For any $\sigma > 0$ and any $x \ge 0$, we have the Mellin inversion formula 
\[
\psi(x) = \frac{1}{2 \pi i}\int_{\sigma - i \infty}^{\sigma + i \infty} \hat{\psi}(s)x^{-s}\, ds.
\]

\end{enumerate}
\end{lemma}
\begin{proof}
See \cite[Proposition~A.3.1]{Ko}.
\end{proof}

Now we fix a real-valued smooth function $\psi(x)$ on $[0, \infty)$ with compact support satisfying $\psi(x) = 1$ and $0 \le \psi(x) \le 1$. 
We note that we can take $\psi$ which satisfies the assumptions of Lemma~\ref{lemma smooth}. 
We put 
\[
\varphi_X(s) = \sum_{n=1}^{\infty}\frac{b_n \psi(n/X)}{n^{s}}
\]

\[
\varphi_X(s, \omega) = \sum_{n=1}^{\infty}\frac{b_n \omega(n) \psi(n/X)}{n^s}
\]
for $X \ge 2$.

\begin{lemma} \label{Prob1}
For all compact set $C \subset \mathcal{R}$
\[
\lim_{X \to \infty} \limsup_{N \to \infty} \frac{1}{N + 1}\sum_{k=N}^{2N} \sup_{s \in C} |\varphi(s + ih\gamma_k) - \varphi_X(s +ih\gamma_k)| = 0.
\]
\end{lemma}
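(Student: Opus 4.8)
The plan is to combine a Mellin-transform representation of $\varphi-\varphi_X$ with a discrete second-moment bound for $\varphi$ along the points $h\gamma_k$. First I would reduce $C$ to a convenient shape: any compact $C\subset\mathcal R$ is covered by finitely many closed disks contained in $\mathcal R$, and since $\sup_{C}|g|\le\sum_i\sup_{D_i}|g|$ over such a cover, it is enough to treat $C$ a closed disk, and, after shrinking, one whose $\delta$-neighbourhood $C_\delta:=\{s:\dist(s,C)\le\delta\}$ is still a closed disk in $\mathcal R$ with real-part projection an interval shorter than $\min\{\Re(z):z\text{ a pole of }\varphi\}-\rho$ (only finitely many poles lie in $\sigma\ge\rho$, all with real part $>\sigma_0>\rho$, so this is a fixed positive number; the condition is vacuous if $\varphi$ is holomorphic in $\sigma\ge\rho$). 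Put $g_k(z):=\varphi(z+ih\gamma_k)-\varphi_X(z+ih\gamma_k)$, holomorphic on $C_\delta$ for all large $k$ (the poles of $\varphi$ lie in a bounded set, $h\gamma_k\to\infty$, and $\varphi_X$ is entire since $\psi$ has compact support; the few small $k$ do not affect the limit). Then the mean value inequality for the subharmonic function $|g_k|^2$ followed by the Cauchy--Schwarz inequality and Fubini gives
\[
\frac{1}{N+1}\sum_{k=N}^{2N}\sup_{s\in C}|g_k(s)|\le\left(\frac{1}{\pi\delta^2}\int_{C_\delta}\frac{1}{N+1}\sum_{k=N}^{2N}|g_k(z)|^2\,dA(z)\right)^{1/2},
\]
so everything reduces to a bound for $\tfrac{1}{N+1}\sum_{k=N}^{2N}|g_k(z)|^2$ uniform in $z\in C_\delta$.

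Next, for fixed $z\in C_\delta$ and $w=z+ih\gamma_k$, I would apply Lemma~\ref{lemma smooth}(3): for $\theta>\alpha+\beta+1-\sigma_1$ the Dirichlet series converges on the line $\Re(w+u)=\Re(w)+\theta$, so interchanging summation and integration yields $\varphi_X(w)=\frac{1}{2\pi i}\int_{(\theta)}\varphi(w+u)\hat\psi(u)X^u\,du$. Now move the contour to $\Re(u)=\theta_1$ with $\rho-\min_{z\in C_\delta}\Re(z)<\theta_1<\min\{0,\ \min\{\Re(\text{poles})\}-\max_{z\in C_\delta}\Re(z)\}$; such a $\theta_1$ exists exactly because of the diameter restriction, lies in $(-1/2,0)$ (using $\rho\ge\alpha+\beta+1/2$), and makes $\sigma':=\Re(z)+\theta_1\in(\rho,\min\{\Re(\text{poles})\})$ for every $z\in C_\delta$. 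Crossing the simple pole of $\hat\psi$ at $u=0$ (residue $\varphi(w)\psi(0)=\varphi(w)$) and the finitely many poles of $u\mapsto\varphi(w+u)$ at $u=\rho_0-w$, with the horizontal pieces vanishing by the rapid decay in Lemma~\ref{lemma smooth}(2), gives
\[
g_k(z)=-\frac{1}{2\pi i}\int_{(\theta_1)}\varphi(w+u)\hat\psi(u)X^u\,du-\sum_{\rho_0}\mathrm{Res}_{u=\rho_0-w}\bigl[\varphi(w+u)\hat\psi(u)X^u\bigr].
\]
Each residue term is, uniformly in $z\in C_\delta$, $\ll_{X,K}\gamma_k^{-K}$ for every $K$ (since $|X^{\rho_0-w}|=X^{\Re(\rho_0)-\Re(z)}$ is a bounded power of $X$ and $\hat\psi$ and its derivatives decay faster than any power of $|\Im(\rho_0-w)|\gg\gamma_k$), so these terms contribute $0$ to $\limsup_N$ for each fixed $X$ and may be discarded.

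For the remaining integral $I_k(z)$ I would write $u=\theta_1+i\tau$ and apply Cauchy--Schwarz in $\tau$ against the finite measure $|\hat\psi(\theta_1+i\tau)|\,d\tau$, obtaining $|I_k(z)|^2\ll X^{2\theta_1}\int_{\mathbb R}|\hat\psi(\theta_1+i\tau)|\,|\varphi(\sigma'+i(\Im(z)+h\gamma_k+\tau))|^2\,d\tau$, hence after averaging and Fubini
\[
\frac{1}{N+1}\sum_{k=N}^{2N}|I_k(z)|^2\ll X^{2\theta_1}\int_{\mathbb R}|\hat\psi(\theta_1+i\tau)|\left(\frac{1}{N+1}\sum_{k=N}^{2N}\bigl|\varphi(\sigma'+i(\Im(z)+h\gamma_k+\tau))\bigr|^2\right)d\tau.
\]
The inner average is the crux. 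Combining the continuous second moment (iv) and the polynomial bound (iii) with Gallagher's discretization lemma, and using (\ref{weak}) to control the clustering of the $\gamma_k$ inside intervals of length $\asymp 1/\log\gamma_k$, one should obtain, for $\rho<\sigma'<\min\{\Re(\text{poles})\}$ and uniformly in a real shift $v$,
\[
\sum_{N\le k\le 2N}\bigl|\varphi(\sigma'+ih\gamma_k+iv)\bigr|^2\ll N\,(1+|v|),
\]
since $h\gamma_N,\dots,h\gamma_{2N}$ sweep out an interval of length $\asymp h\gamma_N$ carrying exactly $N+1$ ordinates. This gives $\tfrac{1}{N+1}\sum_{k=N}^{2N}|g_k(z)|^2\ll X^{2\theta_1}$ for all large $N$, uniformly in $z\in C_\delta$ (the rapid decay of $\hat\psi$ absorbing the $\tau$-dependence), and feeding this through the first display yields
\[
\limsup_{N\to\infty}\frac{1}{N+1}\sum_{k=N}^{2N}\sup_{s\in C}|\varphi(s+ih\gamma_k)-\varphi_X(s+ih\gamma_k)|\ll X^{\theta_1},
\]
which tends to $0$ as $X\to\infty$ because $\theta_1<0$. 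The only nonroutine ingredient is the displayed discrete second moment: unlike (iv) it is sensitive to the spacing of the ordinates $\gamma_k$, and (\ref{weak}) is precisely what makes the Montgomery--Vaughan/Gallagher passage from the continuous to the discrete mean square go through; the rest is the Mellin-smoothing machinery of \cite{Ko}.
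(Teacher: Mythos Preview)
Your proposal is correct and follows the same overall strategy as the paper: express $\varphi-\varphi_X$ via the Mellin inversion of Lemma~\ref{lemma smooth}, shift the contour left into the region where assumption~(iv) applies (picking up the pole of $\hat\psi$ at $0$ and the finitely many poles of $\varphi$), and control the discrete average along $h\gamma_k$ by combining Gallagher's lemma with assumption~(iv), assumption~(iii), and the weak Montgomery hypothesis~(\ref{weak}).

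The only differences are presentational. First, to pass from $\sup_{s\in C}$ to a pointwise estimate, the paper uses Cauchy's integral formula over $\partial\mathcal{R}$ and hence needs only a discrete \emph{first} moment of $|\varphi|$ (which it quotes from \cite[Lemma~2.7]{GLM}), whereas you use the sub-mean-value inequality for $|g_k|^2$ over small disks and hence work with the discrete \emph{second} moment directly; since the first-moment bound in the paper is itself obtained from the second moment via Cauchy--Schwarz, this is the same computation rearranged. Second, the paper shifts to the fixed abscissa $\sigma_0$ (so no diameter restriction on $C$ is needed), while you shift to a line depending on $C_\delta$; both land in the interval $(\rho,\min\{\Re(\text{poles})\})$ where (iv) is available. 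Your treatment of the residue terms (rapid decay of $\hat\psi$ in $\gamma_k$) is slightly sharper than the paper's $(1+h\gamma_k)^{-1}$ bound, but either suffices. The exact power of $(1+|v|)$ in your discrete second moment is immaterial, as you note, since $\hat\psi$ decays faster than any polynomial.
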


\begin{proof}

From Lemma~\ref{lemma smooth} and definition of $\varphi_X(s)$, we see that 
\[
\varphi_X(s) = \frac{1}{2 \pi i} \int_{c - i\infty}^{c + i\infty} \varphi(s +w) \hat{\psi}(w) X^w\, dw
\]
for $c > \alpha + \beta +1$. 
We write $z_1,\dots, z_M$ for the poles of $\varphi$ contained in $\overline{D}_{\rho}$ and $r_1, \dots, r_M$ for its residues. 
Let $\delta(z)$ be a positive number satisfying $\Re(z) - \delta(z) = \sigma_0$ for $\Re(z) > \sigma_0$.
If $z \ne z_j$ for $1 \le j \le M$ and $\Re(z) > \sigma_0$, then by the residue theorem, we have 
\[
\varphi(z) - \varphi_X(z) = -\frac{1}{2 \pi i} \int_{-\delta(z) -i \infty}^{-\delta(z) + i\infty} \varphi(z +w) \hat{\psi}(w) X^w\, dw - \sum_{j=1}^{M} r_j \hat{\psi}(z_j -z)X^{z_j -z}.
\] 
Let $N$ be sufficiently large. 
Then $\varphi(s + ih\gamma_k) - \varphi_X(s + ih\gamma_k)$ is holomorphic on $\overline{\mathcal{R}}$ for $N \le k \le 2N$. 
Therefore we have  
\begin{align*}
&\sum_{k=N}^{2N} \sup_{s \in C}|\varphi(s + ih\gamma_k) - \varphi_X(s + ih\gamma_k)| \\
&= \frac{1}{2\pi} \sum_{k=N}^{2N} \sup_{s \in C} \left| \int_{\partial \mathcal{R}} \frac{\varphi(z + ih\gamma_k) - \varphi_X(z + ih\gamma_k)}{z - s}\, dz \right| \\
&\le \frac{1}{2\pi \dist(C, \partial\mathcal{R})} \int_{\partial \mathcal{R}} \sum_{k=N}^{2N} |\varphi(z + ih\gamma_k) - \varphi_X(z + ih\gamma_k)| |dz| \\
&\le \frac{1}{4\pi^2 \dist(C, \partial\mathcal{R})} \int_{\partial \mathcal{R}} \sum_{k=N}^{2N}  \int_{-\delta(z) -i \infty}^{-\delta(z) + i\infty} |\varphi(z + w + ih\gamma_k)| |\hat{\psi}(w) X^w|\, |dw| |dz| \\
&\ \  + \frac{|\partial \mathcal{R}|}{2\pi \dist(C, \partial\mathcal{R})} \sup_{z \in \partial \mathcal{R}}\sum_{k=N}^{2N} \sum_{j=1}^{M} |r_j| |\hat{\psi}(z_j -z -ih\gamma_k)| X^{\Re(z_j -z)} \\
&\le \frac{|\partial \mathcal{R}|}{4\pi^2 \dist(C, \partial\mathcal{R})} \sup_{z \in \partial \mathcal{R}} X^{-\delta(z)} \int_{-\infty}^{\infty} \sum_{k=N}^{2N} |\varphi(\Re(z) -\delta(z) + i\tau + ih\gamma_k)| |\hat{\psi}(-\delta + i\tau)| \, d\tau  \\
&\ \  + \frac{|\partial \mathcal{R}|}{2\pi \dist(C, \partial\mathcal{R})} \sup_{z \in \partial \mathcal{R}}\sum_{k=N}^{2N} \sum_{j=1}^{M} |r_j| |\hat{\psi}(z_j -z -ih\gamma_k)| X^{\Re(z_j -z)},
\end{align*}
where $\dist(C, \partial{\mathcal{R}})$ is the minimal distance between $C$ and $\partial{\mathcal{R}}$, and  $|\partial{\mathcal{R}}|$ is the length of $\partial{\mathcal{R}}$.  

We consider the first term. 
Using (\ref{weak}), assumption (iv) and the Gallagher Lemma on the discrete mean (see \cite[Lemma~1.4]{Mo71}), we have 

\[
\frac{1}{N + 1}\sum_{k=N}^{2N} |\varphi(\Re(z) -\delta(z) + i\tau + ih\gamma_k)| \ll 1 + |\tau|
\]
(cf. \cite[Lemma~2.7]{GLM}).
Thus, we obtain 
\begin{align*}
\frac{1}{N + 1}\sup_{z \in \partial \mathcal{R}} \int_{-\infty}^{\infty} \sum_{k=N}^{2N} |\varphi(\Re(z) -\delta(z) + i\tau + ih\gamma_k)| |\hat{\psi}(-\delta(z) + i\tau)| \, d\tau \ll 1.
\end{align*}

We consider the second term. 
It is known that 
\[
\gamma_k \sim \frac{2\pi k}{\log{k}}
\]
as $k \to \infty$, so we have  
\[
\gamma_k \gg \frac{k}{\log{k}}.
\]

By Lemma~\ref{lemma smooth} we have  
\begin{align*}
&\sup_{z \in \partial \mathcal{R}}\sum_{k=N}^{2N} \sum_{j=1}^{M} |r_j| |\hat{\psi}(z_j -z -ih\gamma_k)| X^{\Re(z_j -z)} \\
&\ll_{r_j} \sup_{z \in \partial \mathcal{R}}\sum_{j=1}^{M}X^{\Re(z_j -z)}  \sum_{k=N}^{2N} (1 + |\Im(z_j) -\Im(z) -h\gamma_k|)^{-1} \\
&\ll_{M, z_j, \mathcal{R}} \sup_{z \in \partial \mathcal{R}}\sup_{1 \le j \le M} X^{\Re(z_j -z)} \sum_{k=N}^{2N} \frac{\log{k}}{k} \\
&\ll X^{\frac{1}{2}} (\log{N}).
\end{align*}

Since $\delta(z) \ge \sigma_1 - \sigma_0 > 0$ for all $z \in \partial \mathcal{R}$, we conclude 
\begin{align*}
&\lim_{X \to \infty} \limsup_{N \to \infty} \frac{1}{N + 1} \sum_{k=N}^{2N} \sup_{s \in C} |\varphi(s + ih\gamma_k) - \varphi_X(s +ih\gamma_k)| \\
&\ll \lim_{X \to \infty} \limsup_{N \to \infty} (X^{-(\sigma_1 - \sigma_0)} + X^{\frac{1}{2}} (\log{N}) N^{-1}) = 0.
\end{align*}

\end{proof}

\begin{lemma} \label{mean}
The following statements hold.
\begin{enumerate}  
\item[(i)] 
The product $\prod_{n=1}^{\infty}\prod_{j=1}^{g(n)}(1 - a_m^{(j)}\omega(p)^{f(j, n)}p_n^{-f(j, n)s})^{-1}$ and the series $\sum_{n=1}^{\infty} b_n \omega(n)n^{-s}$ are holomorphic on the domain $\sigma > \alpha + \beta + 1/2$ for almost all $\omega \in \Omega$. 

\item[(ii)]
For $\sigma > (\sigma_0 + \sigma_1)/2$,
\[
\mathbb{E}^{\m}[|\varphi(s, \omega)|] \ll_{\mathcal{K}, \sigma_0}  1 + |t|
\]
holds.
\end{enumerate}
\end{lemma}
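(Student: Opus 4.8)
The plan is to treat both parts with the standard machinery of random Dirichlet series over the probability space $(\Omega,\mathcal{B}(\Omega),\m)$. For part (i), the key input is the orthogonality relation $\int_{\Omega}\omega(m)\overline{\omega(n)}\,d\m(\omega)=\delta_{mn}$, which is immediate from the independence of the coordinates $\omega(p)$ and from $\int_{S^{1}}\omega(p)^{a}\overline{\omega(p)}^{b}\,d\m_{p}=\delta_{ab}$ for $a,b\ge 0$. Next I would record the coefficient bound $b_{n}\ll_{\varepsilon}n^{\alpha+\beta+\varepsilon}$ valid for \emph{all} $n$, not merely for those all of whose prime factors are large: write $\varphi(s)=\varphi_{1}(s)\varphi_{2}(s)$, where $\varphi_{1}$ is the finite product over primes below a fixed threshold and $\varphi_{2}$ is the product over the remaining primes; the coefficients of $\varphi_{2}$ are supported on integers all of whose prime factors are large, hence obey the bound of \cite[Appendix]{KM}, while those of $\varphi_{1}$ are supported on smooth numbers and, since $\varphi_{1}$ is holomorphic for $\sigma>\beta$, are $\ll_{\varepsilon}n^{\beta+\varepsilon}$, so the convolution satisfies $b_{n}\ll_{\varepsilon}n^{\alpha+\beta+\varepsilon}$. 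Consequently $\sum_{n}|b_{n}|^{2}n^{-2\sigma}(\log n)^{2}<\infty$ for every $\sigma>\alpha+\beta+1/2$; since $\{\omega(n)\}_{n}$ is an orthonormal system in $L^{2}(\Omega)$, Menshov's theorem on orthogonal series then gives almost sure convergence of $\sum_{n}b_{n}\omega(n)n^{-s}$ at each fixed $s$ with $\sigma>\alpha+\beta+1/2$, and the usual passage from pointwise a.s.\ convergence to convergence uniform on compact subsets (via Cauchy's integral formula on a slightly larger disc and a countable exhaustion of the half-plane) shows that $\sum_{n}b_{n}\omega(n)n^{-s}$ is holomorphic on $\{\sigma>\alpha+\beta+1/2\}$ for almost all $\omega$. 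For the Euler product I would pass to logarithms: its only non-absolutely-convergent piece is the term $\sum_{p_{n}}\bigl(\sum_{j:\,f(j,n)=1}a_{n}^{(j)}\bigr)\omega(p_{n})p_{n}^{-s}$, a sum of independent mean-zero terms with $\sum_{p_{n}}\bigl|\sum_{j:\,f(j,n)=1}a_{n}^{(j)}\bigr|^{2}p_{n}^{-2\sigma}<\infty$ on $\{\sigma>\alpha+\beta+1/2\}$, hence a.s.\ convergent there by Kolmogorov's theorem; therefore the product converges a.s.\ on that half-plane, and since both the product and the series coincide with the absolutely convergent Dirichlet expansion of $\varphi$ on $\sigma>\alpha+\beta+1$, they agree on $\{\sigma>\alpha+\beta+1/2\}$ by analytic continuation.

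For part (ii), note that $(\sigma_{0}+\sigma_{1})/2>\sigma_{0}>\rho\ge\alpha+\beta+1/2$, so the region in question lies strictly inside the half-plane of part (i), where the representation $\varphi(s,\omega)=\sum_{n}b_{n}\omega(n)n^{-s}$ is available. By the Cauchy--Schwarz inequality, $\mathbb{E}^{\m}[|\varphi(s,\omega)|]\le\bigl(\mathbb{E}^{\m}[|\varphi(s,\omega)|^{2}]\bigr)^{1/2}$; the partial sums of $\sum_{n}b_{n}\omega(n)n^{-s}$ converge in $L^{2}(\Omega)$, so by orthogonality $\mathbb{E}^{\m}[|\varphi(s,\omega)|^{2}]=\sum_{n}|b_{n}|^{2}n^{-2\sigma}\le\sum_{n}|b_{n}|^{2}n^{-(\sigma_{0}+\sigma_{1})}$, a finite constant depending only on $\sigma_{0}$ and on $\sigma_{1}$, hence on $\mathcal{K}$. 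This already yields the stronger bound $\mathbb{E}^{\m}[|\varphi(s,\omega)|]\ll_{\mathcal{K},\sigma_{0}}1$, a fortiori $\ll_{\mathcal{K},\sigma_{0}}1+|t|$.

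The main obstacle lies in the careful part of (i): the second-moment estimates only give convergence in $L^{2}(\Omega)$ at individual points, and upgrading this to almost sure convergence that is moreover uniform on compact subsets of $\{\sigma>\alpha+\beta+1/2\}$ requires the standard but slightly delicate argument (Menshov's theorem, respectively Kolmogorov's theorem, for the pointwise statements, and a Cauchy-integral estimate over a countable family of discs for the local uniformity), together with the short verification of the coefficient bound for the small-prime factor $\varphi_{1}$. Once these are in hand, part (ii) is immediate.
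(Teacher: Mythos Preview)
Your argument is correct. For part~(i) it follows essentially the same route as the paper, which simply cites the Kolmogorov three-series theorem for the Euler product and the Rademacher--Menshov theorem for the Dirichlet series (these are exactly \cite[Theorems~1.2.11 and~1.2.9]{La96} in the paper's references); your additional verification that $b_{n}\ll_{\varepsilon}n^{\alpha+\beta+\varepsilon}$ holds for \emph{all} $n$, via the factorisation $\varphi=\varphi_{1}\varphi_{2}$, is a useful detail that the paper omits.

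For part~(ii) you take a genuinely different and shorter path. The paper argues via Abel summation: setting $S(u)=\sum_{n\le u}b_{n}\omega(n)n^{-\sigma_{0}}$, it first bounds $\mathbb{E}^{\m}[|S(u)|]\le(\sum_{n\le u}|b_{n}|^{2}n^{-2\sigma_{0}})^{1/2}\le M$ uniformly in $u$, then writes $\varphi(s,\omega)=(s-\sigma_{0})\int_{1}^{\infty}S(u)\,u^{-(s-\sigma_{0}+1)}\,du$ and pulls the expectation inside the integral, obtaining $\mathbb{E}^{\m}[|\varphi(s,\omega)|]\le M\,|s-\sigma_{0}|/(\sigma-\sigma_{0})\ll 1+|t|$. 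Your direct computation of the $L^{2}$ norm, $\mathbb{E}^{\m}[|\varphi(s,\omega)|^{2}]=\sum_{n}|b_{n}|^{2}n^{-2\sigma}<\infty$, avoids the partial-summation step entirely and yields the sharper bound $\mathbb{E}^{\m}[|\varphi(s,\omega)|]\ll_{\mathcal{K},\sigma_{0}}1$. The only point worth spelling out is that the $L^{2}$ limit of the partial sums agrees a.s.\ with the pointwise limit $\varphi(s,\omega)$ from part~(i); this is immediate since a subsequence of an $L^{2}$-convergent sequence converges almost surely. The paper's approach has the minor advantage of requiring only the first-moment bound on $S(u)$ and not the identification of the $L^{2}$ limit, but for the application in Lemma~\ref{Prob2} either estimate suffices.
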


\begin{proof}

Applying the Kolmogorov theorem (see \cite[Theorem~1.2.11]{La96}) and the convergence theorem relating with orthogonal random elements (see \cite[Theorem~1.2.9]{La96}), we can prove (i).

We consider (ii).
Let 
\[
S(u) = \sum_{n \le u} \frac{b_n \omega(n)}{n^{\sigma_0}}.
\]
By the Cauchy--Schwartz inequality, there exists $M > 0$ such that 
\begin{align*}
\mathbb{E}^{\m}[|S_u|] &\le \left( \mathbb{E}^{\m}[|S_u|^2] \right)^{\frac{1}{2}} \\
                       &= \left(\sum_{n \le u} \frac{|b_n|^2}{n^{2\sigma_0}} \right)^{\frac{1}{2}} < M.
\end{align*}
By the definition of $S_u$, we have
\begin{align*}
\varphi(s, \omega) &= \int_{1^-}^{\infty} \frac{1}{u^{s - \sigma_0}}\, dS_u \\
&= (s - \sigma_0) \int_{1}^{\infty} \frac{S_u}{u^{s - \sigma_0 -1}}\, du.
\end{align*}
Thus, for $\Re(s) > (\sigma_0 + \sigma_1)/2$, we have 
\begin{align*}
\mathbb{E}^{\m}[|\varphi(s, \omega)|] &\le |s - \sigma_0| \int_{1}^{\infty} \frac{\mathbb{E}^{\m}[|S_u|]}{u^{\sigma - \sigma_0 - 1}}\, du \\
&\le M \frac{|s - \rho|}{\sigma - \sigma_0} \\
&\ll_{\mathcal{K}, \sigma_0} 1 + |t|.
\end{align*}

\end{proof}

From this lemma, we see that 
\begin{align*}
\varphi(s, \omega) &= \prod_{n=1}^{\infty}\prod_{j=1}^{g(n)}(1 - a_m^{(j)}\omega(p)^{f(j, n)}p_n^{-f(j, n)s})^{-1}\\
                   &= \sum_{n=1}^{\infty} \frac{b_n \omega(n)}{n^s}
\end{align*}
holds for $\sigma > \rho$ for almost all $\omega \in \Omega$ in the sense of analytic continuation.

\begin{lemma} \label{Prob2}
For all compact set $C \subset \mathcal{R}$,
\[
\lim_{X \to \infty} \mathbb{E}^{\m} [\sup_{s \in C}|\varphi(s, \omega) -\varphi_X(s, \omega)| ].
\]

\end{lemma}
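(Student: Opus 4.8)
The plan is to prove the random counterpart of Lemma~\ref{Prob1}, with the discrete mean-value estimate replaced by the orthogonality of the family $\{\omega(n)\}$ in $L^2(\Omega,\m)$. First I would reduce the supremum over $C$ to a boundary integral: for almost all $\omega$ the difference $\varphi(\cdot,\omega)-\varphi_X(\cdot,\omega)$ is holomorphic on a neighbourhood of $\overline{\mathcal{R}}$ (by Lemma~\ref{mean}(i) together with the analytic continuation statement following its proof, and because $\varphi_X(s,\omega)$ is an \emph{entire} function of $s$, the defining sum being finite since $\psi$ has compact support), so Cauchy's integral formula gives
\[
\sup_{s\in C}|\varphi(s,\omega)-\varphi_X(s,\omega)|\le\frac{1}{2\pi\,\dist(C,\partial\mathcal{R})}\int_{\partial\mathcal{R}}|\varphi(z,\omega)-\varphi_X(z,\omega)|\,|dz|.
\]
Applying $\mathbb{E}^{\m}$ and interchanging it with the boundary integral (Tonelli, the contour having finite length) reduces the lemma to showing $\sup_{z\in\partial\mathcal{R}}\mathbb{E}^{\m}[|\varphi(z,\omega)-\varphi_X(z,\omega)|]\to0$ as $X\to\infty$.

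By the Cauchy--Schwarz inequality it is enough to control the second moment. Since the $\omega(p)$ are independent and Haar-distributed on $S^1$, one has $\mathbb{E}^{\m}[\omega(n)\overline{\omega(m)}]=\delta_{n,m}$, i.e. $\{\omega(n)\}_{n\ge1}$ is orthonormal in $L^2(\Omega,\m)$. For $z\in\partial\mathcal{R}$ we have $\Re(z)\ge\sigma_1>\sigma_0$, while $\sum_{n}|b_n|^2 n^{-2\sigma_0}\le M<\infty$ already appeared in the proof of Lemma~\ref{mean}(ii) as $\sup_u\mathbb{E}^{\m}[|S_u|^2]$; hence the series $\sum_n b_n\omega(n)n^{-z}$ converges in $L^2(\Omega,\m)$, and its $L^2$-limit agrees almost surely with the pointwise limit $\varphi(z,\omega)$ provided by Lemma~\ref{mean}(i) (valid since $\sigma_1>\sigma_0>\rho\ge\alpha+\beta+1/2$). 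Parseval's identity then yields
\[
\mathbb{E}^{\m}[|\varphi(z,\omega)-\varphi_X(z,\omega)|^2]=\sum_{n=1}^{\infty}\frac{|b_n|^2\,|1-\psi(n/X)|^2}{n^{2\Re(z)}}.
\]
Since $0\le\psi\le1$ and $\psi$ equals $1$ on a neighbourhood of the origin, the function $1-\psi$ is supported away from $0$, so the only nonzero terms have $n\gg X$; for such $n$ one has $n^{-2\Re(z)}\ll X^{-2(\sigma_1-\sigma_0)}n^{-2\sigma_0}$, and therefore the right-hand side is $\ll X^{-2(\sigma_1-\sigma_0)}\sum_n|b_n|^2n^{-2\sigma_0}\ll X^{-2(\sigma_1-\sigma_0)}$, uniformly in $z\in\partial\mathcal{R}$. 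Combining the three displays gives $\mathbb{E}^{\m}[\sup_{s\in C}|\varphi(s,\omega)-\varphi_X(s,\omega)|]\ll X^{-(\sigma_1-\sigma_0)}\to0$, which is the assertion.

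The step that needs the most care — the main, though modest, obstacle — is the identification on $\partial\mathcal{R}$ of $\varphi(z,\omega)$, a priori defined there only by analytic continuation, with the sum of its Dirichlet series, so that the orthogonality computation is legitimate; this is exactly where Lemma~\ref{mean}(i) and the remark after it are used, and it requires keeping track of the chain $\sigma_1>\sigma_0>\rho\ge\alpha+\beta+1/2$. Should one prefer to avoid this identification, an alternative is to shift the Mellin contour exactly as in the proof of Lemma~\ref{Prob1}: for almost all $\omega$ the function $\varphi(\cdot,\omega)$ has no poles in $\sigma>\rho$, so there are now no residue terms, and one applies Lemma~\ref{mean}(ii) in place of the discrete mean-value estimate, the rapid decay of $\hat{\psi}$ from Lemma~\ref{lemma smooth}(2) making the integral over the shifted line converge and leaving a factor $X^{-\delta}\to0$.
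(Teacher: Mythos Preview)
Your proposal is correct, and in fact you offer two proofs. Your \emph{primary} argument --- Cauchy's formula on $\partial\mathcal{R}$ followed by Parseval for the orthonormal system $\{\omega(n)\}$ in $L^2(\Omega,\m)$ --- is a genuinely different route from the paper's. The paper instead takes exactly what you sketch as your ``alternative'': it shifts the Mellin contour as in Lemma~\ref{Prob1} (with $\delta=(\sigma_1-\sigma_0)/4$ and no residue terms, since $\varphi(\cdot,\omega)$ is almost surely pole-free in the relevant strip), then bounds the resulting integral using the first-moment estimate $\mathbb{E}^{\m}[|\varphi(s,\omega)|]\ll 1+|t|$ of Lemma~\ref{mean}(ii) together with the rapid decay of $\hat\psi$. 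Your orthogonality approach is the more elementary of the two --- it dispenses with Mellin inversion and with Lemma~\ref{mean}(ii) altogether, and even yields the sharper rate $X^{-(\sigma_1-\sigma_0)}$ in place of the paper's $X^{-(\sigma_1-\sigma_0)/4}$ --- while the paper's approach has the structural virtue of running entirely parallel to Lemma~\ref{Prob1}. One small caveat: your rate argument assumes $\psi\equiv1$ on a neighbourhood of $0$, which the paper's somewhat garbled hypothesis ``$\psi(x)=1$'' presumably intends; if only $\psi(0)=1$ is available, dominated convergence still gives the limit, just without the explicit rate.
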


\begin{proof}
By Lemma~\ref{mean}~(i) we have
\[
\sup_{s \in C}|\varphi(s, \omega) -\varphi_X(s, \omega)|
\ll X^{-\delta} \int_{\partial \mathcal{R}}\, |dz| \int_{-\infty}^{\infty}|\varphi(z - \delta +i\tau, \omega)| |\hat{\psi}(-\delta + i\tau)|\, d\tau, 
\]
where $\delta = (\sigma_1 - \sigma_0)/4$  in the same way as Lemma~\ref{Prob1}.
From Lemma~\ref{mean}~(ii), we have
\begin{align*}
\mathbb{E}^{\m} [\sup_{s \in C}|\varphi(s, \omega) -\varphi_X(s, \omega)| ]
&\ll X^{-\delta} \int_{\partial \mathcal{R}}\, |dz| \int_{-\infty}^{\infty}\mathbb{E}^{\m}[|\varphi(z - \delta +i\tau, \omega)|] |\hat{\psi}(-\delta + i\tau)|\, d\tau \\
&\ll X^{-\delta} \to 0
\end{align*}
as $X \to \infty$.

\end{proof}

We consider the discrete topology on $\mathbb{N}_{N \le n \le 2N} := \{n \in \mathbb{N}: N \le n \le 2N\}$. 
Then we define the probability measure on $(\mathbb{N}_{N \le n \le 2N}, \mathcal{B}(\mathbb{N}_{N \le n \le 2N}))$ by 
\[
\mathbb{P}_N(A) = \frac{1}{N + 1} \# A, 
\]
for $A \in \mathcal{B}(\mathbb{N}_{N \le n \le 2N})$. 
Furthermore, let $\mathcal{P}_0$ be a finite set of prime numbers, and we define the probability measure on 
$(\prod_{p \in \mathcal{P}_0} S_p, \mathcal{B}(\prod_{p \in \mathcal{P}_0} S_p))$ by 
\[
Q^{\mathcal{P}_0}_N(A) = \frac{1}{N  + 1} \# \left\{N \le k \le 2N : (p^{ih\gamma_k})_{p \in \mathcal{P}_0} \in A \right\}, 
\]
$A \in  \mathcal{B}(\prod_{p \in \mathcal{P}_0} S_p)$. 
Then, the next lemma holds. 

\begin{lemma} \label{Prob3}
The probability measure $Q^{\mathcal{P}_0}_N$ converges weakly to $\otimes_{p \in \mathcal{P}_0} \m_p$ as $N \to \infty$. 
\end{lemma}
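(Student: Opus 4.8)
The plan is to prove Lemma~\ref{Prob3} by the classical method of showing that the Fourier transforms of $Q^{\mathcal{P}_0}_N$ converge pointwise to the Fourier transform of the Haar measure $\otimes_{p \in \mathcal{P}_0}\m_p$, and then invoking the continuity theorem for weak convergence on the compact group $\prod_{p \in \mathcal{P}_0} S_p$. Concretely, writing $\mathcal{P}_0 = \{q_1,\dots,q_r\}$, the characters of $\prod_{p \in \mathcal{P}_0} S^1$ are indexed by $\underline{\ell} = (\ell_1,\dots,\ell_r) \in \mathbb{Z}^r$, and the Fourier transform of $Q^{\mathcal{P}_0}_N$ at $\underline{\ell}$ is
\[
\widehat{Q^{\mathcal{P}_0}_N}(\underline{\ell}) = \frac{1}{N+1}\sum_{k=N}^{2N} \prod_{j=1}^{r} \left( q_j^{ih\gamma_k} \right)^{\ell_j} = \frac{1}{N+1}\sum_{k=N}^{2N} \exp\left( i h \gamma_k \sum_{j=1}^{r} \ell_j \log q_j \right).
\]
When $\underline{\ell} = \underline{0}$ this equals $1$, matching $\widehat{\otimes_p \m_p}(\underline{0}) = 1$. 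When $\underline{\ell} \ne \underline{0}$, unique factorization of integers guarantees $a := \sum_{j=1}^{r}\ell_j \log q_j \ne 0$, and since $\widehat{\otimes_p\m_p}(\underline{\ell}) = 0$ for $\underline{\ell}\ne\underline{0}$, the lemma reduces to the exponential-sum estimate
\[
\frac{1}{N+1}\sum_{k=N}^{2N} e^{i a h \gamma_k} \longrightarrow 0 \qquad (N \to \infty)
\]
for every fixed nonzero real $a$.

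The key step, therefore, is establishing this cancellation in the sum over $e^{i a h\gamma_k}$. I would carry this out using a known uniform estimate for such sums over the ordinates of the nontrivial zeros. The standard tool is the Landau-type formula
\[
\sum_{0 < \gamma \le T} x^{i\gamma} = -\frac{T}{2\pi}\,\frac{\Lambda(x)}{x^{1/2}\log x} \cdot \mathbf{1}_{x\in\mathbb{N}} + O_x(\log T)
\]
for fixed $x > 1$ (with a similar statement for general real exponents), combined with the asymptotic $\gamma_k \sim 2\pi k/\log k$ already recalled in the proof of Lemma~\ref{Prob1}, which lets one pass between a sum over $k \in [N, 2N]$ and a sum over $\gamma \in (\gamma_N, \gamma_{2N}]$. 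Writing $x = e^{ah}$ (so that $x^{i\gamma} = e^{iah\gamma}$; if $x < 1$ one takes conjugates), the main term contributes $0$ whenever $e^{ah}$ is not an integer, and when $e^{ah}$ is an integer it still contributes only $O(T/\log T) = o(T)$ relative to the count $N \asymp T\log T/(2\pi)$ of zeros up to height $T$. Partial summation against the sequence $\gamma_k$ then converts $\sum_{\gamma \in (\gamma_N,\gamma_{2N}]} x^{i\gamma} = o(\gamma_{2N})$ into $\frac{1}{N+1}\sum_{k=N}^{2N} e^{iah\gamma_k} = o(1)$, which is exactly what we need. An alternative, and perhaps cleaner, route is to cite an existing discrete limit theorem in this circle of ideas: the analogous weak-convergence statement for $Q^{\mathcal{P}_0}_N$ (or its counterpart with $\tau$ ranging over $[0,N]$) is precisely \cite[Lemma~7]{GLM} or the corresponding lemma in \cite{La19}, whose proof is exactly the Fourier-transform computation above; one then only has to check that the interval $[N, 2N]$ rather than $[1, N]$ causes no difference, which it does not, since $\gamma_{2N} - \gamma_N \to \infty$ and the same Landau formula applies to the truncated range.

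The main obstacle I anticipate is purely bookkeeping rather than conceptual: one must ensure the error term in the Landau formula is genuinely $o(N)$ after renormalization, i.e. that the $O_x(\log T)$ (which depends on $x$, hence on $\underline{\ell}$, but $\underline{\ell}$ is fixed throughout) is negligible against $N \asymp T\log T$. Since $\underline{\ell}$ is held fixed while $N \to \infty$, this is immediate, and the uniformity in $\underline{\ell}$ required for weak convergence is automatic because the continuity theorem only needs pointwise (in $\underline{\ell}$) convergence of characteristic functions on a discrete dual group. I would therefore organize the write-up as: (1) reduce to the exponential sum via the Fourier/continuity theorem on $\prod_{p\in\mathcal{P}_0}S^1$; (2) invoke the Landau formula together with $\gamma_k \sim 2\pi k/\log k$ and partial summation to get $o(1)$; (3) conclude. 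This mirrors the structure already used elsewhere in the paper (the treatment of the second term in Lemma~\ref{Prob1}) and in \cite{GLM}, \cite{La19}.
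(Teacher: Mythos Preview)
Your approach is correct and coincides with the paper's: the proof there simply cites \cite[Theorem~2.3]{GLM}, whose argument is exactly the Fourier-transform/Weyl reduction on the torus $\prod_{p\in\mathcal{P}_0} S^1$ followed by the exponential-sum estimate for $\sum_k e^{ia\gamma_k}$ that you outline. One small correction: Landau's classical formula is for $\sum_{0<\gamma\le T} x^{\rho}$ rather than $\sum x^{i\gamma}$ (and the main term carries no $1/\log x$), but the unconditional uniform-distribution input for $(a\gamma_k)$ modulo $1$ that you ultimately need is precisely what the cited GLM result supplies, so your sketch goes through as written once this is adjusted.
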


\begin{proof}

We can prove this lemma in the same way as \cite[Theorem~2.3]{GLM}. 

\end{proof} 

\begin{proposition} \label{prop1}
The probability measure $P_N$ converges weakly to $P$ as $N \to \infty$.
\end{proposition}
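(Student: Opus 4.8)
The proof follows the standard Laurin\v{c}ikas-type scheme: one first proves a limit theorem for the cut-off sums $\varphi_X$ using Lemma~\ref{Prob3}, and then lets $X \to \infty$ with the help of Lemmas~\ref{Prob1} and~\ref{Prob2}. Equip $\mathcal{H}(\mathcal{R})$ with its usual metric: choose compacta $C_1 \subset C_2 \subset \cdots$ with $\bigcup_{\ell} C_{\ell} = \mathcal{R}$ and $C_{\ell} \subset \mathrm{int}\, C_{\ell+1}$, and put
\[
\rho(f, g) = \sum_{\ell=1}^{\infty} 2^{-\ell} \frac{\sup_{s \in C_{\ell}} |f(s) - g(s)|}{1 + \sup_{s \in C_{\ell}} |f(s) - g(s)|},
\]
so that $(\mathcal{H}(\mathcal{R}), \rho)$ is a complete separable metric space in which $\rho$-convergence is exactly uniform convergence on compact subsets of $\mathcal{R}$. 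For $X \ge 2$ let $P_{N,X}$ and $P_X$ denote the distributions on $(\mathcal{H}(\mathcal{R}), \mathcal{B}(\mathcal{H}(\mathcal{R})))$ of $\varphi_X(s + ih\gamma_k)$ under $\mathbb{P}_N$ and of $\varphi_X(s, \omega)$ under $\m$, respectively.

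The first step is to show that, for each fixed $X$, $P_{N,X}$ converges weakly to $P_X$ as $N \to \infty$. Since $\psi$ has compact support, say $\supp \psi \subset [0, A]$, the sum $\varphi_X(s, \omega) = \sum_{n \le AX} b_n \omega(n) \psi(n/X) n^{-s}$ is finite and depends only on $\omega(p)$ for primes $p \le AX$; hence there is a continuous map $u_X \colon \prod_{p \le AX} S_p \to \mathcal{H}(\mathcal{R})$ — a finite linear combination of monomials in the coordinates with holomorphic coefficients — with $\varphi_X(s, \omega) = u_X\bigl((\omega(p))_{p \le AX}\bigr)$ and $\varphi_X(s + ih\gamma_k) = u_X\bigl((p^{-ih\gamma_k})_{p \le AX}\bigr)$. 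Writing $v_X(z) = u_X(\bar z)$ and $\mathcal{P}_0 = \{p : p \le AX\}$, and using that $\m_p$ is invariant under complex conjugation, one checks that $P_{N,X} = Q_N^{\mathcal{P}_0} \circ v_X^{-1}$ and $P_X = \bigl(\otimes_{p \le AX} \m_p\bigr) \circ v_X^{-1}$; hence Lemma~\ref{Prob3} and the continuous mapping theorem give $P_{N,X} \Rightarrow P_X$.

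The second step is the passage $X \to \infty$. By Lemma~\ref{Prob2}, $\mathbb{E}^{\m}\bigl[\sup_{s \in C_{\ell}} |\varphi(s, \omega) - \varphi_X(s, \omega)|\bigr] \to 0$ for every $\ell$, and since each summand of $\rho$ is bounded by $2^{-\ell}$, dominated convergence gives $\mathbb{E}^{\m}[\rho(\varphi(\cdot, \omega), \varphi_X(\cdot, \omega))] \to 0$; by Markov's inequality $\varphi_X(\cdot, \omega) \to \varphi(\cdot, \omega)$ in $\m$-probability, so $P_X \Rightarrow P$ as $X \to \infty$. The same reasoning applied to Lemma~\ref{Prob1} yields, for every $\varepsilon > 0$,
\[
\lim_{X \to \infty} \limsup_{N \to \infty} \mathbb{P}_N\Bigl(\rho\bigl(\varphi(s + ih\gamma_k),\, \varphi_X(s + ih\gamma_k)\bigr) \ge \varepsilon\Bigr) = 0.
\]
Feeding the three facts — $P_{N,X} \Rightarrow P_X$ as $N \to \infty$, $P_X \Rightarrow P$ as $X \to \infty$, and the display above — into the standard weak-convergence criterion for approximating families (see \cite{Ko}; cf.\ \cite{GLM}) yields $P_N \Rightarrow P$, which is the claim.

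Within this proposition only bookkeeping remains: that $\varphi_X(\cdot, \omega)$ and, for $N$ large, $\varphi(s + ih\gamma_k)$ genuinely lie in $\mathcal{H}(\mathcal{R})$ — the former is a Dirichlet polynomial, hence entire; $\varphi(\cdot, \omega) \in \mathcal{H}(\mathcal{R})$ for $\m$-a.e.\ $\omega$ by Lemma~\ref{mean}(i) since $\overline{\mathcal{R}} \subset \{\sigma > \alpha + \beta + 1/2\}$; and by assumption~(ii) the poles of $\varphi(s + ih\gamma_k)$ lie at $s = z_j - ih\gamma_k$, whose imaginary parts tend to $-\infty$ and so eventually leave the fixed horizontal strip of $\mathcal{R}$ — and that one may pass from the ``$\sup$ over $C_{\ell}$'' estimates to the metric $\rho$. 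Neither is a real obstacle; the genuine difficulty of the whole argument sits upstream, in the discrete mean-value bound $\frac{1}{N+1}\sum_{k=N}^{2N} |\varphi(\sigma + i\tau + ih\gamma_k)| \ll 1 + |\tau|$ obtained from \eqref{weak}, assumption~(iv) and the Gallagher lemma, and in the equidistribution Lemma~\ref{Prob3}, rather than in the present assembly.
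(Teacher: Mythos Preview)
Your proposal is correct and follows exactly the route the paper intends: the paper's proof is a one-line reference to Lemmas~\ref{Prob1}, \ref{Prob2}, \ref{Prob3} together with the Portmanteau theorem and \cite[Proposition~1]{En}, and what you have written is precisely the standard Laurin\v{c}ikas assembly that lies behind that reference. Your handling of the sign in $p^{\pm ih\gamma_k}$ via $v_X(z)=u_X(\bar z)$ and the conjugation-invariance of $\m_p$ is clean, and the passage from the $\sup_{C_\ell}$ estimates to the metric $\rho$ is routine; no step is missing.
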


\begin{proof}
Using the Portmanteau theorem (see \cite[Theorem~13.16]{Kl}), Lemma~\ref{Prob1}, Lemma~\ref{Prob2} and  Lemma~\ref{Prob3}, 
we can prove this Proposition (cf. \cite[Proposition~1]{En}). 

\end{proof}

\section{proof of main theorem}

Let 
\[
S := \{f \in \mathcal{H}(\mathcal{R}) :\text{ $f(s) \ne 0$ or $f(s) \equiv 0$} \}.
\]
Then, using assumption~(v) and same method \cite[Lemma~6]{La98}, we see that the support of the measure $P$ coincides with $S$ (cf. \cite[Lemma~6]{LM}).

\begin{proof}[Proof of Theorem~\ref{main theorem}]

Let $\mathcal{K}$ be a compact set in $D_\rho$ with connected complement, let $f$ be a non-vanishing continuous function on $\mathcal{K}$ that is analytic in the interior of $\mathcal{K}$. 
We define $\mathcal{R}$ as (\ref{definition of R}).
Fix $\varepsilon > 0$. 

By the Mergelyan theorem, there exists a polynomial $G(s)$ such that 
\[
\sup_{s \in \mathcal{K}} |f(s) - \exp(G(s))| < \varepsilon/2
\]
since $f$ is non-vanishing on $\mathcal{K}$. 
Here we define an open set of $\mathcal{H}(\mathcal{R})$ by 
\[\Phi(G):= \left\{g \in \mathcal{H}(\mathcal{R}) : \sup_{s \in \mathcal{K}} |g(s) - \exp(G(s))| < \varepsilon/2 \right\}.
\]
Applying the Portmanteau theorem, Proposition~\ref{prop1}, and a property of the support of $P$, 
we have  
\begin{align*}
&\liminf_{N \to \infty} \frac{1}{N + 1} \# \left\{N \le k \le 2N :  \sup_{s \in \mathcal{K}} |\varphi(s +ih\gamma_k) - \exp(G(s))| < \varepsilon/2 \right\} \\ \\
&= \liminf_{N \to \infty} P_N(\Phi(G)) \ge P(\Phi(G)) >0.
\end{align*}
Now, the inequality
\[
\sup_{s \in \mathcal{K}} |\varphi(s + ih\gamma_k) -f(s)| \le \sup_{s \in \mathcal{K}} |\varphi(s + ih\gamma_k) -\exp(G(s))| + \sup_{s \in \mathcal{K}} |\exp(G(s)) - f(s)|
\]
holds. 
Thus, we obtain the conclusion.

\end{proof}

\begin{acknowledgment}

The author would like to thank Professor Kohji Matsumoto for his helpful comments. 
The author is grateful to Professor J\"{o}rn Steuding for making me aware of theirs paper.
This work was financially supported by JST SPRING, Grant Number JPMJSP2125. 

\end{acknowledgment}

\begin{flushleft}
{\footnotesize
{\sc
Graduate School of Mathematics, Nagoya University, Chikusa-ku, Nagoya 464-8602, Japan.
}\\
{\it E-mail address}: {\tt m21029d@math.nagoya-u.ac.jp}
}
\end{flushleft}

\end{document}